\documentclass[12pt,oneside,reqno]{amsart}
\usepackage{amsmath,amsfonts,amssymb,stmaryrd}

\usepackage{color}
\usepackage[T1]{fontenc}
\usepackage{lmodern} 
\usepackage[utf8]{inputenc}   

\usepackage{tikz}
\usetikzlibrary{calc}
\usepackage{subfig}

\newtheorem{theorem}{Theorem}[section]

\newtheorem{lemma}[theorem]{Lemma}

\theoremstyle{definition}
\newtheorem{definition}[theorem]{Definition}
\theoremstyle{remark}
\newtheorem{remark}[theorem]{Remark}

\numberwithin{equation}{section}
\def\u{{\bf u}}
\def\O{\Omega}

\def\R{\mathbb{R}}
\def\r{\mathbb{R}}

\def\ve{\varepsilon}

\def\u{{\bf u}}
\def\vv{{\bf v}}
\def\O{\Omega}

\def\R{\mathbb{R}}

\def\ve{\varepsilon}

\begin{document}

	\date{}

\title{Solutions of the divergence equation in Lipschitz spaces}

\author{Mar\'\i a E. Cejas}
\address{IMAS (UBA-CONICET) 
	Ciudad Universitaria\\
	(1428) Ciudad Aut\'onoma de Buenos Aires\\  
	Argentina;
	e-mail: ecejas@mate.unlp.edu.ar}

\author{Ricardo G. Dur\'an}
\address{IMAS (UBA-CONICET) and Departamento de Matem\'atica\\
Facultad de Ciencias Exactas y Naturales\\
Universidad de Buenos Aires\\
Ciudad Universitaria\\
(1428) Ciudad Aut\'onoma de Buenos Aires\\
Argentina;	e-mail: rduran@dm.uba.ar.}

\keywords{Divergence operator, Lipschitz spaces}

\subjclass[2010]{Primary: 42B30; Secondary: 26D10}

\begin{abstract}
	We study the solvability of the divergence equation
	\[
	\operatorname{div} \u = f
	\]
	in bounded $C^2$ domains under homogeneous Dirichlet boundary conditions for data
	$f\in C^{0,\alpha}(\Omega)$
	satisfying the compatibility condition
	$
	\int_\Omega f =0.
	$
	We construct a solution $\u$ such that for every $0<\beta<\alpha$
	$$
	\u\in C^{1,\beta}(\Omega)^n
	$$
	satisfies
	$$
	\|\u\|_{C^{1,\beta}(\Omega)}
	\le
	C\|f\|_{C^{0,\alpha}(\Omega)}.
	$$
	The proof combines localization techniques with a boundary flattening procedure reducing the problem to a model half-cube.
	
\end{abstract}

\maketitle
\section{Introduction}

Let $\Omega \subset \mathbb{R}^n$ be a bounded domain. The problem of solving the divergence equation
\begin{equation}\label{eq:div_intro}
	\operatorname{div} \u = f \quad \text{in } \Omega,
\end{equation}
under the compatibility condition
\[
\int_\Omega f =0,
\]
plays a fundamental role in the analysis of partial differential equations. The construction of right inverses of the divergence operator is a key ingredient in the study of incompressible fluids, particularly for the Stokes and Navier-Stokes systems, and is also closely related to Korn-type inequalities.

For $1<p<\infty$, the classical result of Bogovski\v{\i}  provides a bounded linear operator mapping $L^p_0(\Omega)$ into $W^{1,p}_0(\Omega)^n$, yielding solutions of \eqref{eq:div_intro} together with quantitative norm estimates, see \cite{B}. The original construction was obtained in star-shaped domains through an explicit integral formula and extended to bounded Lipschitz domains by decomposing them as the union of a finite number of star-shaped domains. Since then, Bogovski\v{\i}-type operators have become a standard tool in fluid mechanics and PDEs, see also \cite{DM,CM,AD}.

In the endpoint cases $p=1$ and $p=\infty$, solvability fails in the classical Sobolev framework (see \cite{DFT}), which motivates the search for alternative functional settings. In the previous work \cite{CD}, we studied the divergence equation both in Hardy--Sobolev spaces and in Lipschitz spaces. In particular, for compactly supported data $f\in C^{0,\alpha}(\mathbb{R}^n)$, $0<\alpha<1$,
with vanishing mean, we obtained solutions together with global regularity estimates in Lipschitz spaces.

The compact support assumption plays an essential role in the approach developed in \cite{CD}. Indeed, the construction there produces solutions defined in the whole space $\mathbb{R}^n$ whose regularity is obtained through singular integral estimates. The boundedness of singular integral operators in Lipschitz spaces is proved in \cite{M,St,K2}. Moreover, the solutions produced by this method vanish outside the support of the data, and this property is crucial in order to obtain global Lipschitz estimates. 

This approach does not directly extend to arbitrary $f$. This naturally leads to the problem of whether one can construct solutions with first derivatives in Lipschitz spaces for general $f\in C^{0,\alpha}(\Omega)$ satisfying the compatibility condition.

Some related results in this direction were obtained by Berselli and Longo \cite{BL}, who proved the existence of $C^1(\overline{\Omega})$ solutions under a Dini continuity assumption on the data. They also observed that their methods could likely be adapted to Lipschitz spaces. However, such an extension does not seem to follow directly from their arguments, mainly because the treatment of boundary corrections in the Lipschitz framework requires additional care.

The purpose of this paper is to establish solvability of (\ref{eq:div_intro}) in Lipschitz spaces for bounded $C^2$ domains under homogeneous Dirichlet boundary conditions. More precisely, given
$f\in C^{0,\alpha}(\Omega)$, $0<\alpha<1$, satisfying $\int_\Omega f =0$,
we prove that  there exists a solution $\u$ of the divergence equation  (\ref{eq:div_intro} ), such that for every $0<\beta<\alpha$, $\u \in C^{1,\beta}(\Omega)^n$
and
$$
\|\u\|_{C^{1,\beta}(\Omega)}
\le
C\|f\|_{C^{0,\alpha}(\Omega)}.
$$

Our approach combines the compactly supported solvability theory developed in \cite{CD} with localization and boundary techniques inspired by \cite{BL}. Interior estimates are obtained directly from the compact support theory established in \cite{CD}, avoiding the explicit construction of the solution operator used in \cite{BL}. This yields the desired interior regularity estimates in a more direct way and allows the analysis to focus on the behavior near the boundary.

Near the boundary, we reduce the equation to a model problem in the half-cube setting, see \cite{BL}. The main difficulty then consists in constructing divergence-free boundary corrections compatible with the homogeneous Dirichlet condition while preserving Lipschitz regularity.

Although the data belongs to $C^{0,\alpha}(\Omega)$, the resulting solution satisfies estimates only in $C^{1,\beta}(\Omega)$ for every $0<\beta<\alpha$. This loss arises because the mollification procedure used in the boundary correction does not preserve the full Lipschitz exponent.

To the best of our knowledge, this is the first direct proof of solvability for the divergence equation in Lipschitz spaces with homogeneous Dirichlet boundary conditions for bounded domains.

The paper is organized as follows. In Section 2 we state the main result and introduce the functional setting. Section 3 is devoted to interior estimates. In Section 4 we construct solutions in a half-cube and develop the boundary correction. Finally, in Section 5 we combine these ingredients to obtain the global result.

\section{Functional setting and main result}
We begin by introducing the notation and function spaces used throughout the paper.
\begin{definition}
	Let $\Omega \subset \mathbb{R}^n$ be a bounded domain and $0<\alpha <1$.
	
	We define
	\[
	C^{0,\alpha}(\Omega) := \left\{ f \in C(\overline{\Omega}) :
	\sup_{\substack{x,y \in \Omega \\ x \neq y}}
	\frac{|f(x)-f(y)|}{|x-y|^\alpha} < \infty \right\}.
	\]
	We equip this space with the norm
	\[
	\|f\|_{C^{0,\alpha}(\Omega)} :=
	\|f\|_{L^\infty(\Omega)} +
	\sup_{\substack{x,y \in \Omega \\ x \neq y}}
	\frac{|f(x)-f(y)|}{|x-y|^\alpha}.
	\]
	
	Let $k \in \mathbb{N}$, we say that $f \in C^{k,\alpha}(\Omega)$ if
	$f \in C^k(\Omega)$, all derivatives $D^\gamma f$ with $|\gamma|\le k$
	extend continuously to $\overline{\Omega}$, and for every multi-index
	$\gamma$ with $|\gamma|=k$,
	$
	D^\gamma f \in C^{0,\alpha}(\Omega).
	$
	
	The norm is given by
	$$
	\|f\|_{C^{k,\alpha}(\Omega)} :=
	\sum_{|\gamma|\le k} \|D^\gamma f\|_{L^\infty(\Omega)} +
	\sum_{|\gamma|=k} [D^\gamma f]_{C^{0,\alpha}(\Omega)},
	$$
	where
	$$
	[D^\gamma f]_{C^{0,\alpha}(\Omega)} :=
	\sup_{\substack{x,y \in \Omega \\ x \neq y}}
	\frac{|D^\gamma f(x)-D^\gamma f(y)|}{|x-y|^\alpha}.
	$$

	If $\u:\overline{\Omega}\to\mathbb{R}^m$, we say that $\u \in C^{k,\alpha}(\Omega)^m$, if each component is in $C^{k,\alpha}(\Omega)$.
\end{definition}

By $C$ we will denote a generic constant which can change its value even in the same line.

With the above notation, we can now state the main theorem.

\begin{theorem}\label{thm:global}
	Let $\Omega \subset \mathbb{R}^n$ be a bounded $C^2$ domain and let $f\in C^{0,\alpha}(\Omega)$, with $0<\alpha< 1$, and $\int_\Omega f = 0$.
	Then, there exists $\u$ such that for every $0<\beta<\alpha$, $\u\in C^{1,\beta}(\Omega)^n$,
	$$
	\operatorname{div}\u=f \qquad \text{in }\Omega,
	$$
	$$
	\u=0 \qquad \text{on }\partial\Omega,
	$$
	and
	$$
	\|\u\|_{C^{1,\beta}(\Omega)}
	\le C \|f\|_{C^{0,\alpha}(\Omega)},
	$$
	where the constant $C$ depends on $\Omega$, $n$, $\alpha$, and $\beta$.
\end{theorem}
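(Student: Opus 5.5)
We prove Theorem~\ref{thm:global} by a partition of unity argument that reduces matters to two model problems: compactly supported data in the interior, handled by the theory of \cite{CD}, and data supported near a flat boundary piece in a half-cube. First cover $\overline{\Omega}$ by finitely many open sets $U_0\Subset\Omega$ and $U_1,\dots,U_N$. Each $U_j$, $j\ge1$, is a neighborhood of a boundary point in which $\partial\Omega$ is the graph of a $C^2$ function. Take a subordinate smooth partition of unity $\{\phi_j\}$. Writing $f=\sum_j \phi_j f$ does not preserve the zero mean of the pieces, so we first redistribute the means. Fix smooth bumps $\psi_j$ with $\int\psi_j=1$, supported in $U_j\cap U_{j+1}\cap\Omega$ along a chain connecting the sets; we may order the cover so that consecutive sets overlap, since $\Omega$ is connected. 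Then define $f_j=\phi_j f - c_j\psi_j + c_{j-1}\psi_{j-1}$, with constants $c_j$ chosen recursively so that $\int f_j=0$. The last compatibility condition follows from $\int_\Omega f=0$. Each $f_j$ is supported in $U_j\cap\overline\Omega$ and has zero mean, and $\|f_j\|_{C^{0,\alpha}}\le C\|f\|_{C^{0,\alpha}}$ because $|c_j|\le C\|f\|_{L^\infty}$.

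For the interior piece $f_0$, which has compact support in $\Omega$ and zero mean, we invoke the result of \cite{CD}. It gives $\u_0$ with support in a fixed compact subset of $\Omega$ (the support property noted in the introduction), $\operatorname{div}\u_0=f_0$, and $\|\u_0\|_{C^{1,\alpha}}\le C\|f_0\|_{C^{0,\alpha}}$. This gives even the full exponent. The same treatment applies to the pieces $\psi_j$-corrections that lie inside $\Omega$.

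For a boundary piece $f_j$, flatten the boundary with the $C^2$ map $\Phi(x',x_n)=(x',x_n-g(x'))$, which sends $U_j\cap\Omega$ into the half-cube $Q^+=(-1,1)^{n-1}\times(0,1)$. We transport vector fields by the Piola transform $\u=(\det D\Phi)^{-1}(D\Phi)^{-1}\vv\circ\Phi$ (written appropriately). This converts the divergence exactly: $\operatorname{div}\u=f$ becomes $\operatorname{div}\vv=\tilde f:=(f\circ\Phi^{-1})\,|\det D\Phi^{-1}|$. The Jacobian involves only first derivatives of $g$, which are $C^{1}$, so the Piola map preserves $C^{1,\beta}$ and the zero trace, and $\tilde f$ is $C^{0,\alpha}$ with zero mean. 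It then remains to solve $\operatorname{div}\vv=\tilde f$ in $Q^+$ with $\vv=0$ on $\partial Q^+$, given $\tilde f$ vanishing near the lateral and top faces, with $\|\vv\|_{C^{1,\beta}}\le C\|\tilde f\|_{C^{0,\alpha}}$. Summing $\u=\sum_j\u_j$ then proves the theorem.

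The half-cube problem is the main obstacle. We would first extend $\tilde f$ to the whole cube by even reflection across $x_n=0$ and subtract a multiple of a bump supported in $\{x_n<0\}$ to restore zero mean. The compactly supported theory of \cite{CD} then gives $\mathbf w\in C^{1,\alpha}$ with $\operatorname{div}\mathbf w=\tilde f$ in $Q^+$, but $\mathbf w$ and $\nabla\mathbf w$ need not vanish on $\{x_n=0\}$. We correct with a divergence-free field $\mathbf z$ matching the trace $\mathbf w|_{x_n=0}$. Write $\mathbf w(x',0)=(\mathbf a(x'),b(x'))$. The flux $\int b$ vanishes by the mean condition. We want $\mathbf z=\operatorname{curl}$-type fields, for example
\[
z_i = \partial_n(\eta(x_n)\,x_n\, a_i^\varepsilon(x',x_n))-\dots,
\]
built from an extension of the boundary trace. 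Because the trace of a $C^{1,\alpha}$ field is only $C^{1,\alpha}$ and a divergence-free correction costs one more derivative, we extend via a Poisson-type mollification $a^{x_n}=\rho_{x_n}*a$, with mollification scale equal to the distance to the boundary. This gives derivative bounds $|D^k a^{x_n}|\le C x_n^{1+\alpha-k}$. Plugging these into the antisymmetric-potential construction (using for the normal component a potential $\mathbf A$ with $\operatorname{div}_{x'}\mathbf A=b$ on the face, obtained from \cite{CD} in dimension $n-1$) should produce $\mathbf z\in C^{1,\beta}$ for every $\beta<\alpha$. The loss of the endpoint comes from logarithmic terms in the Hölder estimate of the mollified extension. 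Setting $\vv=\mathbf w-\mathbf z$ then completes the boundary step. Checking these weighted estimates carefully is where the real work lies.
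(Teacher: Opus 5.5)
Your outline follows the paper's proof almost step for step: a partition of unity, interior pieces handled by \cite{CD}, flattening with $\Phi(x',x_n)=(x',x_n-g(x'))$ and the unit-Jacobian Piola map $\u=D\Phi^{-1}(\Phi(x))\,\vv(\Phi(x))$, even reflection in the half-cube, and a divergence-free corrector $(\partial_{x_n}\varphi',-\operatorname{div}'\varphi')$ with $\varphi_j=x_n\theta(x_n)\,(a_j*\rho_{x_n})$, as in Theorem \ref{thm:boundary-correction}.

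The part that is not yet a proof is the one you flag yourself: the half-cube corrector, where you write ``should produce'' and leave the formula for $\mathbf{z}$ unfinished. There are two points to fix there.

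\emph{You make it harder than necessary by not symmetrizing.} The paper replaces the reflected solution by $\frac12\bigl(\mathbf{w}'(x)+\mathbf{w}'(x^*),\,w_n(x)-w_n(x^*)\bigr)$, with $x^*=(x',-x_n)$. Because the data were reflected evenly, this field still has divergence $\tilde f$ in $Q^+$. Its normal trace vanishes identically, so $b$ never appears and only the tangential corrector is needed. Note also that the even reflection already has mean $2\int_{Q^+}\tilde f=0$, so no bump is needed.

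If you keep $b\neq 0$, do not feed a mollified $\mathbf{A}*\rho_{x_n}$ into the potential. Since $\partial_{x_n}(\mathbf{A}*\rho_{x_n})=-\sum_k\partial_k\mathbf{A}*(z_k'\rho)_{x_n}$, its tangential derivatives are only $O(x_n^{\alpha-1})$ when you know just $\nabla\mathbf{A}\in C^{0,\alpha}$. Controlling them requires $D^2\mathbf{A}\in C^{0,\alpha}$, a $C^{1,\alpha}\to C^{2,\alpha}$ estimate that \cite{CD} does not provide as used here. The unmollified field $\bigl(-\eta'(x_n)\mathbf{A}(x'),\,\eta(x_n)b(x')\bigr)$, with $\eta(0)=1$ and $\eta'(0)=0$, is divergence free and $C^{1,\alpha}$ with only $\mathbf{A}\in C^{1,\alpha}$.

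\emph{The estimate you call ``the real work'' is short.} Every second derivative of $\varphi_j$ is a sum of terms $c(x_n)\,(h*\eta_{x_n})$ with $h\in\{a_j,\partial_k a_j\}$ and $\eta\in C_0^\infty(\R^{n-1})$. These are trivially bounded in $C^{0,\alpha}$ in the tangential directions. In $x_n$ you have $\partial_{x_n}(h*\eta_{x_n})=-x_n^{-1}\,h*\bigl(\operatorname{div}(z'\eta)\bigr)_{x_n}$ with $\int\operatorname{div}(z'\eta)=0$. Hence $|\partial_{x_n}(h*\eta_{x_n})|\le Cx_n^{\alpha-1}[h]_{C^{0,\alpha}}$, which integrates to a $C^{0,\alpha}$ bound. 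No logarithms appear, so your explanation of the loss of exponent is not correct. In the paper, $\beta<\alpha$ enters through its use of Lemma \ref{lem:convholder}.

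Two remarks on the global step.

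Your chain redistribution of the means is the right construction. It is more careful than the paper's, which subtracts $(\int f_i)\psi_i$ with a different $\psi_i$ for each $i$ and then asserts $\sum_i\tilde f_i=f$; that identity fails unless the $\psi_i$ coincide. One small correction: a linear ordering with consecutive overlaps need not exist. Redistributing from the leaves of a spanning tree of the overlap graph works the same way. Put an edge between $U_i$ and $U_j$ when $U_i\cap U_j\cap\Omega\neq\emptyset$; this graph is connected because $\Omega$ is.

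Your reason for why the Piola map preserves $C^{1,\beta}$ is not right. The derivative $\partial_{x_k}u_n$ contains the term $\partial_k\nabla g(x')\cdot\vv'(\Phi(x))$. For $g\in C^2$ this term is in general only continuous wherever $\vv'\neq 0$. So this step really needs $g\in C^{2,\beta}$; the paper's appeal to ``standard composition estimates'' has the same defect.
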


\begin{remark}
	The restriction $\beta < \alpha$ reflects a loss of regularity in the construction of the solution. This loss is inherent to the method, particularly due to the boundary correction procedure, which involves mollification arguments that do not preserve the full Lipschitz exponent. At present, it is not known whether the estimate can be improved to obtain solutions in $C^{1,\alpha}(\Omega)n$	under the same assumptions.
\end{remark}

To prove Theorem 2.2, we first establish interior estimates for the Bogovski\v{\i} operator. We then derive regularity estimates up to the boundary by combining the half-cube construction with a suitable change of variables.

\section{Interior estimates} \label{interiores}

In this section we establish the interior solvability estimates that will be used later in the localization argument for the global problem. The main point is that, away from the boundary, the divergence equation can be treated by means of the compactly supported solvability theory developed in \cite{CD}. This provides solutions with full Lipschitz regularity in interior subdomains and serves as the starting point for the boundary analysis carried out in the following sections.

Since the following estimates are interior in nature, we may assume without loss of generality that the domain is star-shaped. The general case follows by standard localization arguments.
 \begin{theorem}
 Let $0<\alpha<1$. Let $\u$ be the Bogovski\v{\i} solution   of 
 $$
 \operatorname{div}\u=f \qquad \text{in }\Omega,
 $$
 $$
 \u=0 \qquad \text{on }\partial\Omega,
 $$
for $f\in C^{0,\alpha}(\Omega)$ satisfying $\int_{\Omega}f=0$, $\u$ belongs to $C^{1,\alpha}(K)$ for any $K\subset \Omega$ compact set, and
$$
\|\u\|_{C^{1,\alpha}(K)}
\le C \|f\|_{C^{0,\alpha}(\Omega)}.
$$
where $C$ depends on $\Omega$ and $K$.
 \end{theorem}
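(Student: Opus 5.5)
Since the domain is star-shaped with respect to a ball $B$, I start from the explicit Bogovski\v{\i} formula
$$
\u(x)=\int_\Omega f(y)\,G(x,y)\,dy,\qquad G(x,y)=\int_0^1 \frac{x-y}{s}\,\omega\Big(y+\frac{x-y}{s}\Big)\frac{ds}{s^{n}},
$$
with $\omega\in C_0^\infty(B)$ and $\int\omega=1$. Its first derivatives have the well-known decomposition
$$
\partial_j \u_i(x)=T_{ij}f(x)+C_{ij}(x)f(x),
$$
where $T_{ij}$ is a Calder\'on--Zygmund principal value operator with kernel $K_{ij}(x,x-y)$, homogeneous of degree $-n$ in the second variable with vanishing mean on spheres, and $C_{ij}$ is bounded and smooth. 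I will show that each of these pieces is $C^{0,\alpha}$ on a compact set $K\subset\Omega$.

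The localization step handles the loss of compact support. Fix a compact $K$, an open set $K\subset U\Subset\Omega$ with $d=\operatorname{dist}(K,\partial U)>0$, and a cutoff $\varphi\in C_0^\infty(\Omega)$ with $\varphi=1$ on $U$. Split $f=\varphi f+(1-\varphi)f$.

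For the first piece, $\varphi f$ is compactly supported, has a $C^{0,\alpha}(\mathbb{R}^n)$ extension by zero, and satisfies $\|\varphi f\|_{C^{0,\alpha}(\mathbb{R}^n)}\le C\|f\|_{C^{0,\alpha}(\Omega)}$. Its mean need not vanish, but this does not matter for the derivative formula above, because the kernel representation is linear and holds pointwise. The singular-integral bounds in Lipschitz spaces from \cite{M,St,K2}, as used in \cite{CD}, then give $[T_{ij}(\varphi f)]_{C^{0,\alpha}}\le C\|f\|_{C^{0,\alpha}}$. These operators have variable kernels $K(x,z)$, but the kernels are smooth in $x$ on bounded sets, so I treat them by the standard expansion in spherical harmonics (or by freezing the variable). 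The multiplier term $C_{ij}\varphi f$ is clearly $C^{0,\alpha}$.

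For the second piece, $(1-\varphi)f$ vanishes on $U$. So for $x\in K$ the integrand is supported where $|x-y|\ge d$, and the kernel is smooth there. Differentiating under the integral then gives $\|\u\|_{C^2(K)}\le C(d)\|f\|_{L^\infty}$, which is stronger than needed. This uses only the bounds $|D^m_xG(x,y)|\le C|x-y|^{-n+1-m}$. The sup norms of $\u$ and $D\u$ on $K$ follow from the same kernel bounds, since $|G(x,y)|\le C|x-y|^{1-n}$ is integrable.

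The main obstacle is the first piece. The standard Lipschitz-space results are stated for convolution Calder\'on--Zygmund operators, while Bogovski\v{\i}'s operator has a variable kernel, and $\varphi f$ need not have zero mean. I would address this by invoking the compact-support theory of \cite{CD} directly where possible. Where it does not apply directly, I would verify the $C^{0,\alpha}$ estimate by hand using the classical Hölder estimate for singular integrals. In that estimate I split the integral near and far from $x$ at scale $|x-x'|$, use the cancellation of $K(x,\cdot)$ on spheres, and use the smoothness of $K$ in $x$ to control $K(x,\cdot)-K(x',\cdot)$.

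Finally, for a general (non-star-shaped) domain, I would write $\Omega$ as a finite union of star-shaped pieces. I then use the Bogovski\v{\i} decomposition $f=\sum f_k$ with $\operatorname{supp} f_k\subset\Omega_k$ and $f_k$ built from $f$ by cutoffs and averages. Each $f_k$ is $C^{0,\alpha}$ on $\Omega_k$ with norm controlled by that of $f$, so the star-shaped estimate applied to each piece completes the argument.
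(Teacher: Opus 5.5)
Your proposal is correct and follows essentially the same route as the paper: the Bogovski\v{\i} kernel on a star-shaped domain, a cutoff equal to $1$ near $K$, the compactly supported piece handled by the Lipschitz-space singular integral theory of \cite{CD}, and the remaining piece handled by the smoothness of $G(x,y)$ for $|x-y|$ bounded below. Your remarks that $\varphi f$ need not have zero mean (harmless, since the derivative representation is linear) and that the kernel is variable make explicit two points the paper passes over when it applies \cite[Theorem 5.4]{CD} to $\psi f$.
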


\begin{proof}
We have that $\u$ is given by
$$
\u(x)=\int_\O G(x,y)f(y)\,dy
$$
with
$$
G(x,y)=\int_0^1 \left(\frac{x-y}{s}\right)\omega\left( y+\frac{x-y}{s}\right) \frac{\,ds}{s^n}
$$
where $\omega \in C_0^{\infty}(B)$, $B\subset \Omega$ a ball and $\int_B \omega =1$, for more details see \cite{CD, AD}.

Let $K\subset\O$ be a compact set, denote $d(x)$ the distance from $x$ to  $\partial \Omega$. Set $\ve>0$ so that $d(x)>\ve$ for every $x\in K$ and $\psi\in C_0^\infty(\O)$ such that $\psi(x)=1$ if $d(x)>\ve/2$.

Then,
$$
\u(x)=\u_1(x)+\u_2(x)=\int_\O G(x,y)\psi(y)f(y)\,dy
+\int_\O G(x,y) (1-\psi(y))f(y)\,dy
$$

Since $\psi f$ is compactly supported in $\Omega$ and can be extended by zero preserving its $C^{0,\alpha}$ regularity, we may apply \cite[Theorem 5.4]{CD} to obtain
$$
\|\partial_{x_j}\u_1\|
_{C^{0,\alpha}(\R^n)}
\le C\|\psi f\|_{C^{0,\alpha}(\R^n)} 
\le C\|f\|_{C^{0,\alpha}(\O)} 
$$
where C depends on $\psi$ and, consequently, on K.

On the other hand, if $x\in K$ and $y\in \mbox{supp\,}(1-\psi)$, we have that 
$$
\ve<d(x)\le |x-y|+d(y)<|x-y|+\ve/2
$$
as a consequence $|x-y|>\ve/2$. Then,
$$
\partial_{x_j}\u_2(x)=\int_\O\partial_{x_j}G(x,y) (1-\psi(y))f(y)\,dy.
$$
Moreover, the kernel vanishes unless
$s > \frac{|x-y|}{d}$,
where $d$ denotes the diameter of $\Omega$, see \cite[Lemma 2.1]{AD}. Since $|x-y| \geq \varepsilon/2$, we obtain $s > \frac{\varepsilon}{2d}$.
Therefore,
\[
\partial_{x_j}G(x,y)
=
\int_{\varepsilon/(2d)}^1
\partial_{x_j}
\left(
\frac{x-y}{s^n}
\omega\left(y+\frac{x-y}{s}\right)
\right)\, ds.
\]

 Given that $\omega$ is smooth and compactly supported, the kernel $\partial_{x_j} G(x,y)$ is smooth away from the singularity. Moreover, in the present situation we have $|x - y| \geq \varepsilon/2$, so the kernel and all its derivatives are uniformly bounded. Therefore, standard estimates for integral operators with smooth kernels imply that $\partial_{x_j} \u_2$ is Lipschitz continuous of order $\alpha$ in $K$, with
$$
\|\partial_{x_j} \u_2\|_{C^{0,\alpha}(K)} \leq C \|f\|_{C^{0,\alpha}(\Omega)}.
$$
where $C$ depends on $\Omega$ and                                                                                                                                             $K$.

It remains to estimate the $L^\infty$ norm of the solution $\u$.
 This follows directly from the integral representation of the Bogovski\v{\i} operator. Indeed, by \cite[Lemma 2.8]{AD} the kernel satisfies
$$
|G(x,y)|
\le
C|x-y|^{-n+1},
$$
and by \cite[Pag 29]{AD}, we have
$$
\|\u\|_{C^{\alpha,0}(\Omega)}
\le
C
\|f\|_{C^{\alpha,0}(\Omega)}.
$$
As a consequence, we have proved
$$
\|\u\|_{C^{1,\alpha}(K)}
\le C \|f\|_{C^{0,\alpha}(\Omega)}.
$$
\end{proof}

\section{The half-cube construction}

In this section we analyze the divergence equation in a model half-cube and develop the boundary correction procedure underlying the proof of the main theorem. The resulting local construction will later be combined with localization and flattening arguments to obtain the global estimates. 
We denote by
$$
Q_1:=(-1,1)^n,
\qquad
Q_1^+:=Q_1\cap\{x_n>0\}.
$$

We start with a convergence lemma in Lipschitz spaces for exponents $\beta<\alpha$. This result will play a key role in the construction of the boundary correction.

\begin{lemma}\label{lem:convholder}
	Let $0<\beta<\alpha<1$, let $g\in C^{0,\alpha}(\mathbb{R}^{n-1})$, and let $\rho\in C_0^\infty(\mathbb{R}^{n-1})$. Set $L:=\displaystyle\int_{\mathbb{R}^{n-1}}\rho(z')\,dz'$. For $\varepsilon>0$, define
	$$
	g^\varepsilon(x')
	:=
	\int_{\mathbb{R}^{n-1}}\rho(z')\,g(x'-\varepsilon z')\,dz',
	\qquad x'\in \mathbb{R}^{n-1}.
	$$
	Then
	$$
	g^\varepsilon \to Lg
	\qquad \text{in } C^{0,\beta}(\mathbb{R}^{n-1})
	\quad \text{as } \varepsilon\to 0.
	$$
	More precisely, there exists a constant $C=C(\alpha,\beta,n,\rho)$ such that
	$$
	\|g^\varepsilon-Lg\|_{C^{0,\beta}(\mathbb{R}^{n-1})}
	\le
	C\,\varepsilon^{\alpha-\beta}\,[g]_{C^{0,\alpha}(\mathbb{R}^{n-1})}.
	$$
	
\end{lemma}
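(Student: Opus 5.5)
Set $h:=g^\varepsilon-Lg$. Since $L=\int\rho$, we can write
$$
h(x')=\int_{\mathbb{R}^{n-1}}\rho(z')\bigl(g(x'-\varepsilon z')-g(x')\bigr)\,dz'.
$$
The plan is to bound separately the sup norm of $h$, its $\alpha$-seminorm, and then to interpolate between the sup norm and the $\alpha$-seminorm to control the $\beta$-seminorm. No regularity of $\rho$ beyond integrability and compact support will be used. Fix $R$ with $\operatorname{supp}\rho\subset B(0,R)$ and write $M:=\int|\rho|$.

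\emph{Step 1 (sup bound).} From $|g(x'-\varepsilon z')-g(x')|\le[g]_{C^{0,\alpha}}\varepsilon^\alpha|z'|^\alpha$ we get
$$
\|h\|_{L^\infty}\le \Bigl(\int|\rho||z'|^\alpha\Bigr)\varepsilon^\alpha[g]_{C^{0,\alpha}}\le MR^\alpha\varepsilon^\alpha[g]_{C^{0,\alpha}}.
$$

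\emph{Step 2 ($\alpha$-seminorm bound).} Translation preserves $[g]_{C^{0,\alpha}}$, so
$$
|g^\varepsilon(x')-g^\varepsilon(y')|\le M[g]_{C^{0,\alpha}}|x'-y'|^\alpha.
$$
Hence $[h]_{C^{0,\alpha}}\le (M+|L|)[g]_{C^{0,\alpha}}\le 2M[g]_{C^{0,\alpha}}$.

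\emph{Step 3 (interpolation).} Fix $x'\neq y'$ and split according to the size of $|x'-y'|$ relative to $\varepsilon$.
\begin{itemize}
\item If $|x'-y'|\ge\varepsilon$, Step 1 gives
$$
|h(x')-h(y')|\le 2\|h\|_\infty\le 2MR^\alpha\varepsilon^{\alpha}[g]_{C^{0,\alpha}}\le 2MR^\alpha\varepsilon^{\alpha-\beta}|x'-y'|^\beta[g]_{C^{0,\alpha}},
$$
using $\varepsilon^\beta\le|x'-y'|^\beta$.
\item If $|x'-y'|<\varepsilon$, Step 2 gives
$$
|h(x')-h(y')|\le 2M[g]_{C^{0,\alpha}}|x'-y'|^{\alpha-\beta}|x'-y'|^\beta\le 2M\varepsilon^{\alpha-\beta}|x'-y'|^\beta[g]_{C^{0,\alpha}}.
$$
\end{itemize}
Thus $[h]_{C^{0,\beta}}\le C\varepsilon^{\alpha-\beta}[g]_{C^{0,\alpha}}$.

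\emph{Step 4 (full norm).} The $L^\infty$ part of the norm is bounded by $C\varepsilon^\alpha[g]_{C^{0,\alpha}}$ by Step 1. This is at most $C\varepsilon^{\alpha-\beta}[g]_{C^{0,\alpha}}$ when $\varepsilon\le1$, which is the relevant regime. For $\varepsilon>1$ the pure seminorm estimate of Step 3 still holds, but the sup part grows like $\varepsilon^\alpha$. I would therefore state the estimate for $0<\varepsilon\le1$, or read the constant accordingly, since the convergence statement only concerns $\varepsilon\to0$.

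The only delicate point is the case split in Step 3 at the scale $|x'-y'|\sim\varepsilon$; the rest is routine. This is also exactly where the loss $\beta<\alpha$ enters: it is what produces the gain $\varepsilon^{\alpha-\beta}$.
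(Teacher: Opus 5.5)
Your proof is correct and is essentially the paper's argument: the $L^\infty$ bound of order $\varepsilon^\alpha[g]_{C^{0,\alpha}}$, followed by the same case split $|x'-y'|\ge\varepsilon$ (use the sup bound) versus $|x'-y'|<\varepsilon$ (use the $\alpha$-H\"older bounds for $g^\varepsilon$ and $Lg$). Your Step 4 caveat is also well founded: the paper's last line absorbs the $\varepsilon^\alpha$ sup bound into $C\varepsilon^{\alpha-\beta}$, which tacitly requires $\varepsilon$ to be bounded (indeed the inequality cannot hold uniformly in $g$ for large $\varepsilon$), but this is harmless because the lemma is only used with $\varepsilon=x_n\in(0,1)$.
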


\begin{proof}
	For every $x'\in \mathbb{R}^{n-1}$, we have
    $$
	|g^\varepsilon(x')-Lg(x')|
	\le
	\int_{\mathbb{R}^{n-1}}|\rho(z')|
	\,|g(x'-\varepsilon z')-g(x')|\,dz'.
	$$
	Since $g\in C^{0,\alpha}(\mathbb{R}^{n-1})$, 
	$$
	|g(x'-\varepsilon z')-g(x')|
	\le
	[g]_{C^{0,\alpha}(\mathbb{R}^{n-1})}\,\varepsilon^\alpha |z'|^\alpha.
	$$
	Hence
	$$
	|g^\varepsilon(x')-Lg(x')|
	\le
	[g]_{C^{0,\alpha}(\mathbb{R}^{n-1})}\,\varepsilon^\alpha
	\int_{\mathbb{R}^{n-1}}|\rho(z')|\,|z'|^\alpha\,dz'.
	$$
	Taking the supremum over $\mathbb{R}^{n-1}$, we obtain
	\begin{equation}\label{eq:linfty-conv-lemma41}
		\|g^\varepsilon-Lg\|_{L^\infty(\mathbb{R}^{n-1})}
		\le
		C\,\varepsilon^\alpha [g]_{C^{0,\alpha}(\mathbb{R}^{n-1})}.
	\end{equation}
	
	We now estimate the $C^{0,\beta}$ seminorm. Let $x',y'\in \mathbb{R}^{n-1}$, $x'\neq y'$. We distinguish two cases.
	
	\medskip
	
	\noindent
	\textbf{1. $|x'-y'|\ge \varepsilon$.}
	
	Using \eqref{eq:linfty-conv-lemma41}, we have
	$$
	|(g^\varepsilon-Lg)(x')-(g^\varepsilon-Lg)(y')|
	\le
	2\|g^\varepsilon-Lg\|_{L^\infty(\mathbb{R}^{n-1})}
	\le
	C\,\varepsilon^\alpha [g]_{C^{0,\alpha}(\mathbb{R}^{n-1})}.
	$$
	Therefore,
	$$
	\frac{|(g^\varepsilon-Lg)(x')-(g^\varepsilon-Lg)(y')|}{|x'-y'|^\beta}
	\le
	C\,\varepsilon^\alpha |x'-y'|^{-\beta}[g]_{C^{0,\alpha}(\mathbb{R}^{n-1})} \le C\,\varepsilon^{\alpha-\beta}[g]_{C^{0,\alpha}(\mathbb{R}^{n-1})}.$$
	
	\medskip
	
	\noindent
	\textbf{2. $|x'-y'|< \varepsilon$.}
	
	We have

	\[
	|(g^\varepsilon-Lg)(x')-(g^\varepsilon-Lg)(y')|
	\le
	|g^\varepsilon(x')-g^\varepsilon(y')|
	+
	|L|\,|g(x')-g(y')|.
	\]
	Since $g\in C^{0,\alpha}(\mathbb{R}^{n-1})$, we only need to estimate the first term,

	$$
	|g^\varepsilon(x')-g^\varepsilon(y')|
	\le
	\int_{\mathbb{R}^{n-1}}|\rho(z')|
	\,|g(x'-\varepsilon z')-g(y'-\varepsilon z')|\,dz'.
	$$
Hence,
	$$
	|g^\varepsilon(x')-g^\varepsilon(y')|
	\le
	[g]_{C^{0,\alpha}(\mathbb{R}^{n-1})}|x'-y'|^\alpha
	\int_{\mathbb{R}^{n-1}}|\rho(z')|\,dz'
	\le
	C [g]_{C^{0,\alpha}(\mathbb{R}^{n-1})}|x'-y'|^\alpha.
	$$
Then,
	$$
	\frac{|(g^\varepsilon-Lg)(x')-(g^\varepsilon-Lg)(y')|}{|x'-y'|^\beta}
	\le
	C [g]_{C^{0,\alpha}(\mathbb{R}^{n-1})}|x'-y'|^{\alpha-\beta}
	 \leq 	\,\varepsilon^{\alpha-\beta}[g]_{C^{0,\alpha}(\mathbb{R}^{n-1})}.$$
	
As a consequence,
	$$
	[g^\varepsilon-Lg]_{C^{0,\beta}(\mathbb{R}^{n-1})}
	\le
	C\,\varepsilon^{\alpha-\beta}[g]_{C^{0,\alpha}(\mathbb{R}^{n-1})}.
	$$
	Together with \eqref{eq:linfty-conv-lemma41}, this yields
	\[
	\|g^\varepsilon-Lg\|_{C^{0,\beta}(\mathbb{R}^{n-1})}
	=
	\|g^\varepsilon-Lg\|_{L^\infty(\mathbb{R}^{n-1})}
	+
	[g^\varepsilon-Lg]_{C^{0,\beta}(\mathbb{R}^{n-1})}
	\le
	C\,\varepsilon^{\alpha-\beta}[g]_{C^{0,\alpha}(\mathbb{R}^{n-1})}.
	\]
	This proves the claim.
\end{proof}

Now, we will construct a solution in a half-cube.

\begin{theorem}\label{thm:half-cube}
	Let $0<\beta<\alpha<1$, and let $f\in C^{0,\alpha}(Q_1^+)$ satisfy
	$\displaystyle\int_{Q_1^+} f(x)\,dx =0$
	and $\mbox{ supp }f\subset Q^+_{1/2}$. Then, there exists a vector field
	$
	\u\in C^{1,\beta}(Q_1^+)^n
	$
	such that
	$$
	\operatorname{div}\u=f \quad \text{in } Q_1^+,
	\qquad
	\u=0 \quad \text{on } \partial Q_1^+,
	$$
	and
	$$
	\|\u\|_{C^{1,\beta}(Q_1^+)}
	\le
	C\,\|f\|_{C^{0,\alpha}(Q_1^+)},
	$$
	where $C=C(n,\alpha,\beta)$.
\end{theorem}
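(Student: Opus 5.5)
The plan is to solve first in the whole space by reflection, and then correct the boundary values on $\{x_n=0\}$ with a divergence-free field built by mollifying the trace at scale $x_n$.

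\textbf{Step 1 (reflection and the result of \cite{CD}).} Extend $f$ evenly across $\{x_n=0\}$ by setting $\tilde f(x',x_n)=f(x',|x_n|)$. Then $\tilde f\in C^{0,\alpha}(\mathbb{R}^n)$ with $\|\tilde f\|_{C^{0,\alpha}}\le 2\|f\|_{C^{0,\alpha}(Q_1^+)}$. It is supported in $\overline{Q_{1/2}}$ and satisfies $\int\tilde f=2\int_{Q_1^+}f=0$. By \cite[Theorem 5.4]{CD} there is $\vv\in C^{1,\alpha}(\mathbb{R}^n)^n$ with $\operatorname{div}\vv=\tilde f$ and $\|\vv\|_{C^{1,\alpha}}\le C\|f\|_{C^{0,\alpha}}$. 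We use that the construction there produces a solution supported in a fixed compact set determined by the support of the data, say $\overline{Q_{1/2}}$ after choosing the auxiliary weight accordingly. Restricted to $Q_1^+$, $\vv$ solves the equation and vanishes on the lateral and top faces. However, its trace $\mathbf g(x'):=\vv(x',0)$ need not vanish. We have $\mathbf g\in C^{1,\alpha}(\mathbb{R}^{n-1})^n$ with support in $\overline{Q'_{1/2}}$. The divergence theorem gives $\int_{\mathbb{R}^{n-1}} g_n\,dx'=-\int_{Q_1^+}f=0$.

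\textbf{Step 2 (divergence-free lift of the trace).} Fix $\rho\in C_0^\infty(\mathbb{R}^{n-1})$ with $\int\rho=1$. Define the tangential part by $\mathbf h'(x',x_n):=\mathbf g'^{\,x_n}(x')$, the mollification of $\mathbf g'=(g_1,\dots,g_{n-1})$ at scale $x_n$, as in Lemma \ref{lem:convholder}. Define the normal part by
$$
h_n(x',x_n):=g_n(x')-\int_0^{x_n}\operatorname{div}'\mathbf h'(x',s)\,ds .
$$
Then $\mathbf h=\mathbf g$ on $\{x_n=0\}$ and $\operatorname{div}\mathbf h=0$ identically in $\{x_n>0\}$. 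Both components are supported laterally in a fixed compact subset of $Q'_1$, say $|x'|_\infty\le 1/2+x_n\sup|{\rm supp}\,\rho|$, which stays inside $Q'_{3/4}$ for $x_n\le 1/4$ once we rescale $\rho$.

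Next take a cutoff $\chi(x_n)$ with $\chi=1$ on $[0,1/8]$ and $\chi=0$ on $[1/4,\infty)$, and set $\mathbf w=\chi\mathbf h$. Then $\operatorname{div}\mathbf w=\chi'(x_n)h_n$ is supported in the compact set $\overline{Q'_{3/4}}\times[1/8,1/4]\subset Q_1^+$. Its integral vanishes, since it equals the boundary flux $-\int g_n=0$. Apply \cite[Theorem 5.4]{CD} once more to get $\mathbf z$, supported inside $Q_1^+$, with $\operatorname{div}\mathbf z=\chi' h_n$. The candidate solution is $\u:=\vv-\mathbf w+\mathbf z$. It has divergence $f$ and vanishes on all of $\partial Q_1^+$.

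\textbf{Step 3 (the main obstacle: $C^{1,\beta}$ bounds for $\mathbf h$ up to $x_n=0$).} This is where the loss $\beta<\alpha$ enters. First, the tangential derivatives $\partial_{x_j}\mathbf h'=(\partial_j\mathbf g')^{x_n}$ are mollifications of $C^{0,\alpha}$ functions. The normal derivative is
$$
\partial_{x_n}\mathbf h'=-\int\rho(z')\,z'\cdot\nabla'\mathbf g'(x'-x_nz')\,dz',
$$
which is a mollification of $\nabla'\mathbf g'$ with the kernel $-z'\rho$, whose total mass $L$ is zero. To obtain H\"older continuity jointly in $(x',x_n)$, including in the $x_n$ variable, I would compare values at heights $t$ and $t'$ by splitting as in the proof of Lemma \ref{lem:convholder}. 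If $|t-t'|\ge \min(t,t')$, both values are within $Ct^{\alpha}$ and $Ct'^{\alpha}$ of their limit $L\nabla'\mathbf g'$, and the estimate $\varepsilon^{\alpha-\beta}$ gives the $\beta$-bound. Otherwise, I would differentiate in the scale, which costs a factor $t^{-1}$ against a gain of $t^{\alpha}$, and again yields a $\beta$-H\"older bound.

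For $h_n$, the derivatives are:
\begin{itemize}
\item $\partial_{x_n}h_n=-\operatorname{div}'\mathbf h'$, which is handled as above;
\item $\partial_{x_j}h_n=\partial_jg_n-\int_0^{x_n}\partial_j\operatorname{div}'\mathbf h'(x',s)\,ds$.
\end{itemize}
In the second formula I write $\partial_j(\partial_i g_i)^{s}=s^{-1}\int(\partial_j\rho)(z')\,\partial_ig_i(x'-sz')\,dz'$. Since $\int\partial_j\rho=0$, this is bounded by $Cs^{\alpha-1}[\nabla\mathbf g]_\alpha$, which is integrable near $s=0$. Its H\"older continuity follows by the same two-case splitting.

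Finally, the estimates for $\mathbf z$ and $\vv$ come from \cite{CD}, and the $L^\infty$ bounds are immediate. Collecting everything gives $\|\u\|_{C^{1,\beta}(Q_1^+)}\le C\|f\|_{C^{0,\alpha}}$. I expect the joint H\"older estimate in Step 3, specifically uniformity in the scale variable $x_n$ near $0$, to be the delicate point.
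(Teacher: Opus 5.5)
Your argument is correct. Step 1 (even reflection, then \cite{CD}) matches the paper, but your boundary correction takes a genuinely different route.

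The paper symmetrizes the reflected solution, making the tangential components even and the normal component odd in $x_n$, so that the normal trace on $\{x_n=0\}$ vanishes. It then subtracts the stream-function field $\Psi=(\partial_{x_n}\varphi_1,\dots,\partial_{x_n}\varphi_{n-1},-\sum_{j}\partial_{x_j}\varphi_j)$ with $\varphi_j=x_n\theta(x_n)(g_j*\rho_{x_n})$. Because the cutoff sits inside $\varphi_j$, $\Psi$ is exactly divergence-free, and a single application of \cite{CD} suffices.

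You skip the symmetrization. Instead you lift the full trace, including $g_n$, by a field $\mathbf h$ that is divergence-free by construction. The cutoff then produces the interior error $\chi'(x_n)h_n$. Its mean vanishes because $\int_{\mathbb{R}^{n-1}}h_n(x',t)\,dx'=\int g_n=0$ for every $t$, so a second interior solve removes it.

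\textbf{What each route buys.} Yours does not depend on the reflection symmetry: any trace with zero normal flux can be lifted. The price is an extra solve and a harder estimate. In the paper, the factor $x_n$ in $\varphi_j$ cancels the $x_n^{-1}$ that comes from differentiating $\rho_{x_n}$. As a result, every first derivative of $\Psi$ is a fixed-kernel mollification of $g_j$ or $\nabla' g_j$ at scale $x_n$, with smooth coefficients in $x_n$. In your construction, $\partial_{x_j}h_n$ contains $\int_0^{x_n}s^{-1}\bigl(\operatorname{div}'\mathbf g'*(\partial_j\rho)_s\bigr)(x')\,ds$, which is a truncated singular integral in $x'$.

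\textbf{Where $\beta<\alpha$ enters.} Splitting that $s$-integral at $s=|x'-y'|$ gives the bound $C|x'-y'|^{\alpha}\bigl(1+|\log|x'-y'||\bigr)$. This logarithm is exactly where you use $\beta<\alpha$. You should state it explicitly rather than appeal to ``the same two-case splitting.''

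A minor point: the kernel $-z'\rho$ has zero mass only if $\rho$ has vanishing first moments (for example, $\rho$ even). Your estimate for $\partial_{x_n}\mathbf h'$ does not need this, so nothing breaks.
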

\begin{proof}
	We follow the strategy from \cite{BL}, adapted to our setting. Starting from $f\in C^{0,\alpha}(Q_1^+)$ with zero mean, consider

	\[f^*(x) = \left\{ \begin{array}{lr} f(x) &  x \in \overline{Q_1^+}\\ f(x^*) &  x \in \overline{Q_1^-} \end{array} \right. \]
	where $x^*=(x',-x_n)$, $x'=(x_1,\dots, x_{n-1})$. Then,
	$f^*\in C^{0,\alpha}(Q_1)$ and has zero integral in $Q_1$. Indeed, the symmetry preserves the Lipschitz seminorm across the hyperplane $\{x_n=0\}$. According to \cite{CD}, since $\mbox{ supp }f^*\subset Q_{1/2}$ we can solve
	\[ \left\{ \begin{array}{lr} \mbox{ div } \u^* =f^* &  \mbox{ in } Q_1\\ \u^*=0  &  \mbox{ on } \partial Q_1 \end{array} \right. \]

	Define
	$$\tilde{\u}=\frac{1}{2}(u^*_1(x)+u_1^*(x^*),\dots,u^*_{n-1}(x)+u^*_{n-1}(x^*), u^*_n(x)-u_n^*(x^*)).$$

	Restrincting $\tilde{\u}$ to $Q_1^+$, we obtain that $\mbox{ div }\tilde{\u}=f$. 
	By construction, the normal component of $\tilde{\u}$ vanishes on $\{x_n = 0\}$, while the tangential components are even functions with respect to $x_n$. As a consequence, $\tilde{\u}$ does not necessarily satisfy the homogeneous boundary condition, which motivates the introduction of the correction term $\Psi$. Namely, we will construct a divergence-free
	correction $\Psi$ such that
	$$
	\Psi_j(x',0)=\widetilde u_j(x',0), \qquad j=1,\dots,n-1,
	\qquad
	\Psi_n(x',0)=0.
	$$
 Once such a correction has been constructed, we define
	$$
	\u:=\widetilde \u-\Psi
	$$
	then,
	$$
	\operatorname{div}\u=f \quad\text{in }Q_1^+,
	\qquad
	\u=0 \quad\text{on }\partial Q_1^+.
	$$
	
	The construction of $\Psi$ and the corresponding estimates are given in Theorem \ref{thm:boundary-correction}, which can be regarded as a Lipschitz-space analogue of \cite[Lemma 22]{BL}.

\end{proof}

\begin{remark}
	The previous result is stated for the unit half-cube only for simplicity of notation. By a standard rescaling argument, the same conclusion holds for arbitrary half-cubes of the form
	$$
	Q^+:=(-r,r)^n\cap\{x_n>0\},
	$$
	with constants depending only on $r$, $\alpha$, $\beta$, and the dimension.
\end{remark}

By extending $\tilde{u}_j(x',0)$ by zero to the whole subspace $\{x_n=0\}$, we may reduce the problem to the case $\mbox{ supp }\tilde{u}_j(x',0) \subset (-1/2,1/2)^{n-1}$. 

\begin{theorem}\label{thm:boundary-correction}
	Let $0<\beta<\alpha<1$. Let $\rho\in C_0^\infty(\mathbb{R}^{n-1})$ be supported in $B(0,1)$ and satisfy $\displaystyle\int_{\mathbb{R}^{n-1}} \rho(x')\,dx' =1$. Let $\theta\in C^2([0,\infty))$ satisfy
	$\theta(0)=1$, $\theta(t)=0 \quad \text{for } t\ge \frac12$.
	Let $\widetilde \u\in C^{1,\alpha}(Q_1^+)^n$ defined as above, and for $j=1,\dots,n-1$, define
	$$
	g_j(x'):=\widetilde u_j(x',0),
	$$
	and
	$$
	\varphi_j(x',x_n)
	:=
	x_n\,\theta(x_n)\,
	(g_j*\rho_{x_n})(x'),
	\qquad x=(x',x_n)\in Q_1^+,
	$$
	where
	\[
	\rho_{x_n}(z'):=x_n^{-(n-1)}\rho\!\left(\frac{z'}{x_n}\right).
	\]
	Define
	$$
	\Psi(x)
	:=
	\left(
	\partial_{x_n}\varphi_1(x),\dots,\partial_{x_n}\varphi_{n-1}(x),
	-\sum_{j=1}^{n-1}\partial_{x_j}\varphi_j(x)
	\right).
	$$
	Then $\Psi\in C^{1,\beta}(Q_1^+)^n$, $\operatorname{div}\Psi=0$ in $Q_1^+$, and
	$$
	\Psi_j(x',0)=\widetilde u_j(x',0),\qquad j=1,\dots,n-1,
	\qquad
	\Psi_n(x',0)=0.
	$$
	Moreover, $\Psi\equiv 0 \qquad \text{for } x_n\ge \frac12$, and
	$$
	\|\Psi\|_{C^{1,\beta}(Q_1^+)}
	\le
	C\,\|\widetilde \u\|_{C^{1,\alpha}(Q_1^+)},
	$$
	where $C=C(n,\alpha,\beta,\rho,\theta)$.

\end{theorem}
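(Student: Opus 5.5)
Write $G_j(x',t):=(g_j*\rho_t)(x')=\int\rho(z')g_j(x'-tz')\,dz'$, so that $\varphi_j=x_n\theta(x_n)G_j(x',x_n)$. First I settle the algebraic identities, which are immediate. Since $\Psi=(\partial_n\varphi_1,\dots,\partial_n\varphi_{n-1},-\sum_j\partial_j\varphi_j)$, we get $\operatorname{div}\Psi=\sum_j\partial_j\partial_n\varphi_j-\sum_j\partial_n\partial_j\varphi_j=0$. This is legitimate once $\varphi_j\in C^2$ in the open half-cube, which holds because $G_j$ is smooth for $x_n>0$. The vanishing $\Psi\equiv0$ for $x_n\ge\frac12$ follows from $\theta(t)=0$ there. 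For the traces, $\partial_n\varphi_j=(\theta+x_n\theta')G_j+x_n\theta\,\partial_tG_j$. As $x_n\to0$ we have $G_j(x',x_n)\to g_j(x')$ by Lemma~\ref{lem:convholder} with $L=1$. So it suffices to show $x_n\partial_tG_j\to0$ uniformly. Similarly $\Psi_n=-x_n\theta\sum_j\partial_jG_j$, and I need $x_n\partial_jG_j\to0$.

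The main work is the $C^{1,\beta}$ estimate, which amounts to $C^{0,\beta}$ bounds on the second derivatives of $\varphi_j$ up to $x_n=0$. The key structural fact is that $g_j\in C^{1,\alpha}(\mathbb{R}^{n-1})$, with norm bounded by $\|\widetilde\u\|_{C^{1,\alpha}}$ (using the zero extension from $(-1/2,1/2)^{n-1}$). Hence $G_j$ is a mollification at scale $t$ of a $C^{1,\alpha}$ function. I will use the standard scaling bounds: $|\partial^\gamma_{x'}\partial_t^k G_j|\le C t^{\alpha-(|\gamma|+k-1)}$ for $|\gamma|+k\ge2$, after putting one derivative on $g_j$ and using the cancellation $\int\partial\rho=0$, or $\int\partial_t[\rho_t]=0$. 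For example, $\partial_tG_j=\int\rho(z')(-z'\cdot\nabla g_j(x'-tz'))\,dz'$ is itself a mollification of $\nabla g_j$ against the kernel $-z'\rho(z')$.

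With this in hand, I expand each second derivative of $\varphi_j$ into the following kinds of terms:
\begin{itemize}
\item Terms of the form (bounded smooth factor)$\times$(mollification of $g_j$ or $\nabla g_j$ at scale $x_n$). Examples are $\theta\,\partial_jG_j=\theta\,(\partial_jg_j)*\rho_{x_n}$ and $\theta\,\partial_tG_j$.
\item Terms of the form $x_n\times$(second derivatives of $G_j$).
\end{itemize}
For the first kind, Lemma~\ref{lem:convholder} gives $C^{0,\beta}$ control in $x'$ uniformly in $x_n$. Note that $\partial_tG_j$ uses the kernel $-z'\rho$, whose integral is $0$, so by the lemma it tends to zero. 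For the second kind, the bound $x_n\cdot x_n^{\alpha-1}=x_n^\alpha$ gives boundedness and decay at the boundary.

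It then remains to check the Hölder seminorm in the full variable $x=(x',x_n)$, not only in $x'$. Take two points $x,y$ and let $\delta=|x-y|$. I split into two cases.
\begin{itemize}
\item If $\delta\ge\frac12\max(x_n,y_n)$, I compare each point with its boundary value: each term differs from its trace by at most $Cx_n^{\alpha-\beta}\delta^\beta$-type quantities, by Lemma~\ref{lem:convholder} with $\varepsilon=x_n$. This gives a bound of $C\delta^\beta$.
\item Otherwise I use the mean value theorem along the segment, which stays at height $\sim x_n$. There the third-derivative bounds $Cx_n^{\alpha-1}$ give increments of size $C\delta x_n^{\alpha-1}\le C\delta^\alpha\le C\delta^\beta$.
\end{itemize}

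I expect the main obstacle to be the $x_n$-direction Hölder estimate of terms like $\theta\,\partial_tG_j$ near the boundary, together with tracking all derivative terms coming from $\theta$. Lemma~\ref{lem:convholder} controls only the $x'$-dependence at fixed scale. Along the $x_n$ direction I would first try the representation $\partial_tG_j=(\nabla g_j)*(\text{kernel})_t$ and estimate $|\partial_t(\cdot)|\le Ct^{\alpha-1}$, integrated in $t$. This yields exponent $\alpha$ vertically, and the loss to $\beta$ is needed only to absorb the $\varepsilon^{\alpha-\beta}$ factors from the lemma.
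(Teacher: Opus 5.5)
Your proposal is correct. It uses the same decomposition as the paper: each second derivative of $\varphi_j$ is a function of $x_n$ times a mollification, at scale $x_n$, of $g_j$ or $\nabla g_j$. It differs at the decisive step, however.

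The paper applies Lemma~\ref{lem:convholder} to each convolution and then asserts a $C^{0,\beta}(Q_1^+)$ bound directly. That lemma only controls the $x'$-dependence at a fixed height. Hölder continuity in $x_n$, and hence joint Hölder continuity in $(x',x_n)$, is therefore never established in the paper. Two ingredients of yours supply exactly this missing step:
\begin{enumerate}
\item the scaling bounds $|\partial^\gamma_{x'}\partial_t^k G_j|\le Ct^{\alpha-(|\gamma|+k-1)}$, obtained by putting one derivative on $g_j$ and the rest on the kernel, each of which then has zero mean because $\int\partial_t[\eta_t]=0$;
\item the near/far case split.
\end{enumerate}

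Your argument in fact gives more than you claim. Both cases yield $C|x-y|^{\alpha}$, not just $C|x-y|^\beta$. In the far case you use only the sup-norm rate $\ve^\alpha$ from \eqref{eq:linfty-conv-lemma41}, not the final statement of the lemma, whose rate $\ve^{\alpha-\beta}$ would not suffice there. In the near case the mean value theorem gives $\delta\,x_n^{\alpha-1}\le C\delta^\alpha$. So your route proves $\|\Psi\|_{C^{1,\alpha}(Q_1^+)}\le C\|\widetilde\u\|_{C^{1,\alpha}(Q_1^+)}$. This shows that the boundary correction itself does not force the loss $\beta<\alpha$, contrary to the paper's remark.

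Two minor corrections.
\begin{enumerate}
\item The kernel $-z'\rho(z')$ has zero integral only if $\rho$ has vanishing first moments, for instance if $\rho$ is even. In general $\partial_tG_j\to-\bigl(\int z'\rho(z')\,dz'\bigr)\cdot\nabla g_j$. This does not hurt you: the term enters $\Psi_j$ with a factor $x_n$, and in the far case you only need its trace to exist and be H\"older continuous.
\item You should say explicitly that $\theta$ is smooth. With only $\theta\in C^2$, as in the statement, $\partial_n^2\varphi_j$ contains $x_n\theta''(x_n)G_j$, which need not be H\"older in $x_n$. Also, your mean value step on $\partial_n^2\varphi_j$ uses $\theta'''$. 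Choosing $\theta\in C^\infty$ resolves both issues.
\end{enumerate}
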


\begin{proof}
	To verify that $\Psi\in C^{1,\beta}(Q_1^+)^n$, we need to prove that first and second derivatives of $\varphi_j$ are in $C^{0,\beta}(Q_1^+)$. Note that, since $\widetilde u_j(x',0) \in C^{1,\alpha}(\mathbb{R}^{n-1})$ and $\rho \in C^\infty_0(\mathbb{R}^{n-1})$, then the kernels $\rho_{x_n}$ and their derivatives with respect to both $x'$ and $x_n$ are smooth and compactly supported, uniformly for $x_n \in (0,1)$. Moreover, all the resulting integrands are bounded by integrable functions independent of $x$. Therefore, differentiation under the integral sign is justified by standard dominated convergence arguments.
	
	\medskip
	\noindent
	\textbf{First derivatives of $\varphi_j$.}
	
	For $j=1,\dots,n-1$, set
	$$
	g_j(x'):=\widetilde u_j(x',0), \qquad x'\in (-1,1)^{n-1}.
	$$
	Since $\widetilde \u\in C^{1,\alpha}(Q_1^+)^n$, we have
	$$
	g_j\in C^{1,\alpha}\bigl((-1,1)^{n-1}\bigr),
	\qquad
	\|g_j\|_{C^{1,\alpha}}
	\le
	C\|\widetilde \u\|_{C^{1,\alpha}(Q_1^+)}.
	$$
	
	By definition,
	$$
	\varphi_j(x',x_n)=x_n\theta(x_n)\,(g_j*\rho_{x_n})(x').
	$$
	For $i=1,\dots,n-1$,
	$$
	\partial_{x_i}\varphi_j
	=
	x_n\theta(x_n)\,((\partial_{x_i}g_j)*\rho_{x_n}).
	$$

	For $i=n$,
	\begin{eqnarray*}
		\partial_{x_n}\varphi_j
		&=&\left[ \theta(x_n)+x_n\theta'(x_n)\right] (g_j\ast \rho_{x_n})+ x_n\theta(x_n)\sum_{k=1}^{n-1} g_j\ast \partial_{x_k}(x_k \rho)_{x_n}\\&=& \left[ \theta(x_n)+x_n\theta'(x_n)\right] (g_j\ast \rho_{x_n})+ x_n\theta(x_n)\sum_{k=1}^{n-1}\partial_{x_k} g_j\ast (x_k \rho)_{x_n}
	\end{eqnarray*}

	We note that Lemma \ref{lem:convholder} applies to  $\rho$ and $x_k \rho$. This allows us to control the corresponding convolutions in $C^{0,\beta}$ uniformly with respect to $x_n$. Hence,
$$\|\partial_{x_i}\varphi_j\|_{C^{0,\beta}(Q_1^+)}\leq 	C\|\widetilde \u\|_{C^{1,\alpha}(Q_1^+)}$$	
	
	\medskip
	\noindent
	\textbf{ Tangential second derivatives.}
	
	For $i,k\in\{1,\dots,n-1\}$,
	$$
	\partial_{x_k}\partial_{x_i}\varphi_j
	=
	x_n\theta(x_n)\,
	\partial_{x_k}\bigl((\partial_{x_i}g_j)*\rho_{x_n}\bigr)
	=
	x_n\theta(x_n)\,
	\bigl((\partial_{x_i}g_j)*(\partial_{x_k}\rho)_{x_n}\bigr).
	$$
	Since $\displaystyle\int_{\mathbb{R}^{n-1}} \partial_{x_k}\rho =0$, Lemma \ref{lem:convholder} applies to  $\partial_{x_k}\rho$, and yields
	$$
	\|\partial_{x_k}\partial_{x_i}\varphi_j\|_{C^{0,\beta}(Q_1^+)}
	\le
	C\|\partial_{x_i}g_j\|_{C^{0,\alpha}(Q_1^+)}
	\le
	C\|\widetilde \u\|_{C^{1,\alpha}(Q_1^+)}.
	$$
	
	\medskip
	\noindent
	\textbf{ Mixed derivatives.}
	
	Differentiating $\partial_{x_i}\varphi_j$ with respect to $x_n$, we obtain
	
	\begin{eqnarray*}
	\partial_{x_n}	\partial_{x_i} \varphi_j =\left[x_n \theta'(x_n)+(2-n)\theta(x_n) \right]\left(\partial_{x_i}g_j \ast \rho_{x_n} \right)  -\theta(x_n)\left( \partial_{x_i}g_j\ast \psi_{x_n}\right) 
	\end{eqnarray*}
	
	where $\psi(x'):=\displaystyle\sum_{i=1}^{n-1} x_i \partial_{x_i} \rho(x')$, $\displaystyle\int_{\r^{n-1}}\psi(x')\,dx'=-(n-1)\int_{\r^{n-1}}\rho(x')\,dx'=-(n-1).$

	By Lemma \ref{lem:convholder}, both terms are uniformly bounded in $C^{0,\beta}$.
	Therefore,
	$$
	\|\partial_{x_n}\partial_{x_i}\varphi_j\|_{C^{0,\beta}(Q_1^+)}
	\le
	C\|\widetilde \u\|_{C^{1,\alpha}(Q_1^+)}.
	$$
	
	\medskip
	\noindent
	\textbf{Second normal derivatives.}
	
	\begin{eqnarray*}
		\partial_{x_n}\partial_{x_n} \varphi_j&=&\left[2\theta'(x_n)+x_n\theta''(x_n) \right](g_j \ast \rho_{x_n})+ (x_n \theta(x_n))' \sum_{i=1}^{n-1}\partial_{x_i}g_j\ast (x_i\rho)_{x_n} \\&+& x_n \theta(x_n)\sum_{i=1}^{n-1} \partial_{x_i}g_j \ast \partial_{x_n} (x_j \rho)_{x_n}.
	\end{eqnarray*}

	Hence, Lemma \ref{lem:convholder} applies to each term. Consequently,
	\[
	\|\partial_{x_n}\partial_{x_n}\varphi_j\|_{C^{0,\beta}(Q_1^+)}
	\le
	C\|\widetilde \u\|_{C^{1,\alpha}(Q_1^+)}.
	\]
	
	Combining the previous estimates, we get
	\[
	\varphi_j\in C^{2,\beta}(Q_1^+),
	\qquad
	\|\varphi_j\|_{C^{2,\beta}(Q_1^+)}
	\le
	C\|\widetilde \u\|_{C^{1,\alpha}(Q_1^+)}.
	\]

	\medskip
	\noindent

	Since each $\varphi_j\in C^{2,\beta}(Q_1^+)$, it follows that
	$
	\Psi\in C^{1,\beta}(Q_1^+)^n,
	$
	and
	$$
	\|\Psi\|_{C^{1,\beta}(Q_1^+)}
	\le
	C\sum_{j=1}^{n-1}\|\varphi_j\|_{C^{2,\beta}(Q_1^+)}
	\le
	C\|\widetilde \u\|_{C^{1,\alpha}(Q_1^+)}.
	$$
	
	Moreover,
	$$
	\operatorname{div}\Psi
	=
	\sum_{j=1}^{n-1}\partial_{x_j}\partial_{x_n}\varphi_j
	+
	\partial_{x_n}\!\left(-\sum_{j=1}^{n-1}\partial_{x_j}\varphi_j\right)
	=
	0,
	$$
	by equality of mixed derivatives.
	
	Finally, since $\theta(x_n)=0$ for $x_n\ge \frac12$, we have
	\[
	\Psi\equiv 0 \qquad \text{for } x_n\ge \frac12.
	\]
	
For the boundary values, we have from the formula for $\partial_{x_n}\varphi_j$, $j=1, \dots, n-1$,
	$$
	\Psi_j(x',x_n)
	=\left[ \theta(x_n)+x_n\theta'(x_n)\right] (g_j\ast \rho_{x_n})(x')+ x_n\theta(x_n)\sum_{k=1}^{n-1} (g_j\ast \partial_{x_k}(x_k \rho)_{x_n})(x')
	$$
	Since $\displaystyle\int \rho=1$, Lemma \ref{lem:convholder} gives
	$$
	g_j*\rho_{x_n}\to g_j
	\quad\text{in } C^{0,\beta}
	\quad\text{as } x_n\to 0^+.
	$$
	and by the compact support of $\rho$
	$$\int_{\mathbb{R}^{n-1}} \partial_k(z_k\rho(z'))\,dz'
	=
	0,
	$$
	so Lemma \ref{lem:convholder} yields
	 $$(g_j\ast \partial_{x_k}(x_k \rho)_{x_n})(x')
      \to 0
	\quad\text{in } C^{0,\beta}
	\quad\text{as } x_n\to 0^+.
	$$
	Therefore,
	$$
	\Psi_j(\cdot,x_n)\to g_j=\widetilde u_j(\cdot,0)
	\quad\text{in } C^{0,\beta}
	\quad\text{as } x_n\to 0^+,
	$$
	that is,
	$$
	\Psi_j(x',0)=\widetilde u_j(x',0),
	\qquad j=1,\dots,n-1.
	$$
	
	On the other hand,
	$$
	\Psi_n(x',x_n)
	=
	-\sum_{j=1}^{n-1}\partial_{x_j}\varphi_j(x',x_n)
	=
	-x_n\theta(x_n)\sum_{j=1}^{n-1}
	\bigl((\partial_{x_j}g_j)*\rho_{x_n}\bigr)(x').
	$$
	By Lemma \ref{lem:convholder}, the convolutions are uniformly bounded, hence
	$$
	|\Psi_n(x',x_n)|
	\le
	C x_n \|\widetilde \u\|_{C^{1,\alpha}(Q_1^+)},
	$$
		this estimate shows that $\Psi_n(\cdot,x_n) \to 0$ uniformly as $x_n \to 0^+$. By continuity up to the boundary, this implies $\Psi_n(x',0) = 0$.
	\medskip
	\noindent

\end{proof}

\begin{remark}
	In the previous proof, the boundary identities for $\Psi$ are first obtained by taking the limit as $x_n \to 0^+$ with $x'$ fixed. However, since $\Psi \in C^{1,\beta}(Q^+_1)^n$, it admits a continuous extension to the closure $\overline{Q^+_1}$. In particular, the trace of $\Psi$ on $\{x_n = 0\}$ is well defined and coincides with the limit from within the domain. Therefore, the boundary values obtained above hold independently of the way the boundary is approached.
\end{remark}

\section{Proof of the Theorem \ref{thm:global}: Construction of the global solution}

Let $\Omega \subset \mathbb{R}^n$ be a bounded $C^2$ domain and
let $f \in C^{0,\alpha}(\Omega)$ satisfying $\displaystyle \int_\Omega f = 0.$ There exists a finite open covering $\{U_i\}_{i=1}^N$ of $\overline{\Omega}$ such that:

\begin{itemize}
	\item either $U_i \subset \subset \Omega$,
	\item or $U_i$ intersects $\partial\Omega$ and the boundary can be written locally as a graph in $U_i$.
\end{itemize}

Assume that $U_i$ intersects the boundary. Since $\Omega$ is a $C^2$ domain, we may choose the covering neighborhoods near the boundary so that, after a suitable translation, rotation,  and possibly shrinking $U_i$,
$$
\Omega \cap U_i
=
\{(x',x_n)\in Q : x_n>\gamma(x')\},
$$
where $\gamma\in C^2$ and $Q$ is a cube centered at the origin.
We choose a partition of unity
$\{\phi_i\}_{i=1}^N \subset C_0^\infty(U_i)$ such that $\displaystyle \sum_{i=1}^N \phi_i = 1
\quad \text{in } \overline{\Omega}.$ Define $f_i := \phi_i f$. Since $\phi_i \in C_0^\infty(U_i)$, $\mbox{ supp } f_i \subset \subset U_i$, moreover,
$
f = \displaystyle\sum_{i=1}^N f_i.
$
Although $\displaystyle\int_\Omega f = 0$, in general
$
\displaystyle\int_\Omega f_i \neq 0.
$
Then, for each $i$ choose $\psi_i \in C_0^\infty(U_i \cap \Omega)$ such that $\displaystyle \int_\Omega \psi_i = 1.$ Define $\widetilde f_i:=f_i-\left(\displaystyle \int_\Omega f_i \right)\psi_i$, then 
$$
\int_\Omega \widetilde f_i = 0,
\qquad
\mbox{ supp } \widetilde f_i \subset \subset U_i.
$$

Since $\displaystyle\sum_{i=1}^N \int_\Omega f_i = 0$, we still have $f = \displaystyle\sum_{i=1}^N \widetilde f_i.$ To simplify notation, we relabel $\widetilde f_i$ as $f_i$.

If $U_i \subset \subset \Omega$, then $f_i$ has compact support strictly contained in $\Omega$. Extending $f_i$ by zero outside $\Omega$, we may apply the compact support solvability result from \cite{CD}. Therefore, there exists
$$
\u_i \in C^{1,\alpha}(\mathbb{R}^n)^n
$$
such that
$$
\operatorname{div}\u_i=f_i
\quad\text{in }\mathbb{R}^n,
$$
$\u_i$ vanishes outside $\Omega$, and
$$
\|\u_i\|_{C^{1,\alpha}(\mathbb{R}^n)}
\le
C\|f_i\|_{C^{0,\alpha}(\Omega)}.
$$
Restricting to $U_i$, we obtain the desired estimates.

Assume now that $U_i$ intersects the boundary. Define the flattening map
$$
\Phi(x',x_n)=(y',y_n)=(x',\, x_n - \gamma(x')).
$$

\[
D\Phi(x)
=
\begin{pmatrix}
	I_{n-1} & 0 \\
	-\nabla\gamma(x') & 1
\end{pmatrix}
\]

\[
D\Phi^{-1}(y)
=
\begin{pmatrix}
	I_{n-1} & 0 \\
	\nabla\gamma(y') & 1
\end{pmatrix}.
\]

Then $\det D\Phi =1$. Moreover, by construction, $\Phi(U_i\cap\Omega)=Q^+$.

 Define
$F_i(y):=f_i(\Phi^{-1}(y))$, $y\in Q^+$. By the change of variables we have
$$
\int_{Q^+} F_i(y)\,dy
=
\int_{\Omega\cap U_i} f_i(x)\,dx
=0.
$$

By Theorem \ref{thm:half-cube},  there exists a vector field
$
\vv_i\in C^{1,\beta}(Q^+)^n
$
such that
$$
\operatorname{div}_y \vv_i = F_i \quad \text{in } Q^+,
$$

$$
\vv_i=0 \quad \text{on } \partial Q^+
$$
for $0<\beta<\alpha$ and there exists a constant $C=C(\alpha,\beta,n,Q^+)$ so that
\begin{equation}\label{eq:semi-beta-est}
	\|\vv_i\|_{C^{1,\beta}(Q^+)}
	\le
	C \|F_i\|_{C^{0,\alpha}(Q^+)}.
\end{equation}

We now define the vector field in the original variables by
$$
\u_i(x):=D\Phi^{-1}(y)\,\vv_i(y),
\qquad y=\Phi(x).
$$
Its components are given by
$$
u_{i,\ell}(x)=v_{i,\ell}(y), \qquad \ell=1,\dots,n-1,
$$
and
$$
u_{i,n}(x)=v_{i,n}(y)+\nabla\gamma(y')\cdot \bigl(v_{i,1}(y),\dots,v_{i,n-1}(y)\bigr),
$$

We claim that
$$
\operatorname{div}_x \u_i = f_i
\qquad \text{in } U_i\cap\Omega.
$$
Indeed, for $\ell=1,\dots,n-1$,
$$
\partial_{x_\ell}
=
\partial_{y_\ell}
-
(\partial_{y_\ell}\gamma)\,\partial_{y_n},
\qquad
\partial_{x_n}=\partial_{y_n}.
$$
Hence
$$
\sum_{\ell=1}^{n-1}\partial_{x_\ell}u_{i,\ell}
=
\sum_{\ell=1}^{n-1}
\Bigl(
\partial_{y_\ell}v_{i,\ell}
-
(\partial_{y_\ell}\gamma)\,\partial_{y_n}v_{i,\ell}
\Bigr).
$$
On the other hand,
$$
\partial_{x_n}u_{i,n}
=
\partial_{y_n}
\Bigl(
v_{i,n}
+
\nabla\gamma(y')\cdot \bigl(v_{i,1}(y),\dots,v_{i,n-1}(y)\bigr)
\Bigr)
=
\partial_{y_n}v_{i,n}
+
\sum_{\ell=1}^{n-1}
(\partial_{y_\ell}\gamma)\,\partial_{y_n}v_{i,\ell},
$$
because $\gamma$ depends only on $y'$.
Summing the two identities, the mixed terms cancel and we obtain
$$
\operatorname{div}_x \u_i
=
\sum_{\ell=1}^{n-1}\partial_{y_\ell}v_{i,\ell}
+
\partial_{y_n}v_{i,n}
=
\operatorname{div}_y \vv_i
=
F_i(y)
=
f_i(x).
$$

It remains to establish the boundary regularity. Since
$\vv_i\in C^{1,\beta}(Q^+)^n$
and $\Phi$ and $\Phi^{-1}$ are $C^2$ maps, we  obtain by composition that
$$
\u_i\in C^{1,\beta}(U_i\cap\Omega)^n
\qquad \text{for every } 0<\beta<\alpha.
$$

Since $\Phi$ and $\Phi^{-1}$ are $C^2$ diffeomorphisms, standard composition estimates yield
$$
\|\u_i\|_{C^{1,\beta}(U_i\cap\Omega)}
\le
C
\|\vv_i\|_{C^{1,\beta}(Q^+)}.
$$
 Using \eqref{eq:semi-beta-est}, we infer
\[
\|\u_i\|_{C^{1,\beta}(U_i\cap\Omega)}
\le
C \|f_i\|_{C^{0,\alpha}(U_i\cap\Omega)}.
\]

It remains to verify the boundary condition. Given that $\vv_i=0$ on $\partial Q^+$,
and the flattening map sends
$$
\partial\Omega\cap U_i
\quad \text{onto} \quad
\partial Q^+ \cap \left\lbrace y_n=0 \right\rbrace ,
$$
we obtain
$$
\u_i(x)=D\Phi^{-1}(y)\,\vv_i(y)=0
\qquad \text{for every } x\in \partial\Omega\cap U_i.
$$

Finally, define
$$
\u:=\sum_{i=1}^N \u_i.
$$
Since the covering is finite, we conclude that $\u\in C^{1, \beta}(\Omega)^n$. Moreover,
$$
\operatorname{div}\u
=
\sum_{i=1}^N \operatorname{div}\u_i
=
\sum_{i=1}^N f_i
=
f
\quad \text{in } \Omega.
$$
Since each local solution vanishes on the corresponding boundary patch, we also have
$$
\u=0 \qquad \text{on } \partial\Omega.
$$

and
$$
\|\u\|_{C^{1,\beta}(\Omega)}
\le
C \|f\|_{C^{0,\alpha}(\Omega)},
\qquad 0<\beta<\alpha.
$$

This completes the proof.

\begin{remark}
We do not know whether the regularity assumption on the boundary can be weakened. In the present approach, the flattening procedure requires differentiating the transformed vector field
$$
\u(x)=D\Phi^{-1}(\Phi(x))\bf{v}(\Phi(x)),
$$
which introduces second derivatives of the boundary defining function through the term $D(D\Phi^{-1})$. Consequently, the proof relies on the $C^2$ regularity of the flattening map. Whether the result remains valid under weaker boundary assumptions, such as $C^{1,\alpha}$ regularity, would require a different argument and is left open.
	
\end{remark}

\section*{Acknowledgments}
This paper is dedicated to the memory of Ricardo G. Durán.

\end{document}